\newtheorem{Theorem}{Theorem}
\newtheorem{Corollary}{Corollary}[section]
\newtheorem{Lemma}{Lemma}[section]
\newtheorem{Case}{Case}
\numberwithin{equation}{section}
\title{A tight structure theorem for sumsets}
\title{A tight structure theorem for sumsets}
\author{Andrew Granville}
\thanks{A.G. was funded by the European Research Council grant agreement no 670239, and by the Natural
Sciences and Engineering Research Council of Canada (NSERC) under the Canada Research Chairs
program.} 
\address{AG: D\'{e}partement de math\'{e}matiques et de statistique, Universit\'{e} de Montr\'{e}al, CP
6128 succ. Centre-Ville, Montr\'{e}al, QC H3C 3J7, Canada.}
\email{andrew@dms.umontreal.ca}
  \author{Aled Walker}
\thanks{A.W. was supported by a postdoctoral research fellowship at the Centre de Recherches Mathématiques, and is also a Junior Research Fellow at Trinity College Cambridge.}
\thanks{The authors would like to thank George Shakan for helpful communications}
\address{AW: Centre de recherches mathématiques, Université de Montréal, CP 6128 succ. Centre-ville Station, Montréal, QC H3C 3J7, Canada; and Trinity College, Cambridge CB2 1TQ, England.}
\email{aw530@cam.ac.uk}
\begin{document}
\begin{abstract}
Let $A = \{0 = a_0 < a_1 < \cdots < a_{\ell + 1} = b\}$ be a finite set of non-negative integers. We prove that the sumset $NA$ has a certain easily-described structure, provided that $N \geqslant b-\ell$, as recently conjectured in \cite{GS20}. We also classify those sets $A$ for which this bound cannot be improved. 
\end{abstract}
\maketitle

\section{Introduction}
What are the possible postage costs that can be made up from an unlimited supply of $3$ cent
and $5$ cent stamps? One cannot obtain $1c$, $2c$, $4c$, or $7c$ and it is a fun challenge  to show that one can obtain $n$ cents for every other positive integer $n$.
In \emph{the Frobenius postage stamp problem}, one asks the same question given an unlimited supply of $a$ cent and $b$
cent stamps, with $\gcd(a, b) = 1$.

The situation becomes more complicated if one may use at most $N$ stamps. One can show that one can cover every integer amount up to $5N$ cents using at most $N$ $3$  
and $5$ cent stamps, other than 
$1,2,4$ and $7$, as well as $5N-3$ and $5N-1$.

In the language of additive combinatorics, for a given finite set of integers $A$ we wish to understand the structure of the sumset $NA$, where 
\[ 
NA : = \{ a_1 + \cdots + a_N: a_1,\dots,a_N \in A\},
\] 
where the summands are not necessarily distinct. For simplicity we may assume without loss
of generality that the smallest element of $A$ is $0$, and that the greatest common divisor of its elements is $1$.\footnote{Since if $A = g\cdot B + \tau$ then $NA= g\cdot NB + N\tau$, where $g \cdot B = \{gb: b \in B\}$.} Since $0 \in A$ we have $A \subset 2A \subset \cdots \subset NA$, and so 
\[ 
\mathcal{P}(A) := \bigcup \limits_{N=1}^{\infty}NA
\] 
is the set of all integers that are expressible as a finite sum of (not necessarily distinct) elements of $A$. Similarly, we define the exceptional set \[ \mathcal{E}(A) = \{ n \geqslant 1 : n \notin \mathcal{P}(A)\}.\] In the setting of the original postage stamp problem, in this notation we have 
\[ \mathcal{E}(\{0,3,5\}) = \{1,2,4,7\}. \] 

Let $b$ denote the largest element of $A$ so that $\{ 0,b\} \subset A  \subset  \{0,1,\dots, b\}$ which implies that
 $NA \subset \{0,1,\dots, bN\} \setminus \mathcal{E}(A)$. However, in the $A=\{ 0,3,5\}$ example  there are  exceptions other than $\mathcal{E}(A)$: Indeed, if $n \in NA$ then $bN - n \in N(b-A)$,  where $b-A := \{ b-a: a \in A\}$. Therefore $NA \cap (bN - \mathcal{E}(b-A)) = \emptyset$, and thus \[ NA \subset \{0,1,\dots,bN\} \setminus ( \mathcal{E}(A) \cup (bN - \mathcal{E}(b-A))).\] 
Does equality hold in this expression? 
In our example $b-A=\{0,2,5\}$ and $ \mathcal{E}(\{0,3,5\}) = \{1,3\}$ which explains the result above.
It was shown in \cite{GS20} that equality indeed holds for all $N\geq 1$ for all three element sets $A= \{0<a<b\}$ where $(a,b)=1$. If $A=\{ 0,1,b-1,b\}$ then equality does not hold for any $N\leq b-3$ since $\mathcal{E}(A) = \mathcal{E}(b-A) = \emptyset$ and $b-2\not\in NA$ for such $N$. 

Our main result gives an improved bound for the smallest $N_0$, such that we get equality above; that is, \eqref{eq structure theorem} for all $N\geq N_0$. This improved bound is ``best possible'' in several situations.

 \begin{Theorem} [Main theorem]
 \label{thm 1} 
Let $A = \{0 = a_0 < a_1 < \cdots < a_{\ell+1} = b\}$ be a finite set of integers with $\gcd(a_1,\dots,a_{\ell+1}) = 1$ and $\ell \geqslant 1$. If $N \geqslant b-\ell$ then 
 \begin{equation}
 \label{eq structure theorem}
 NA = \{0,1,\dots,bN\} \setminus ( \mathcal{E}(A) \cup (bN - \mathcal{E}(b-A))).
 \end{equation}
 \end{Theorem}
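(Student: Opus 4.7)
The plan is to argue by contradiction: suppose that for some $N \geq b - \ell$ there exists $n \in [0, bN]$ with $n \in \mathcal{P}(A)$ and $bN - n \in \mathcal{P}(b-A)$, yet $n \notin NA$. The first step is to reformulate the problem via minimum-length representations. Since $0 \in A$, we have $n \in NA$ if and only if $n$ admits a representation $n = \sum_i c_i a_i$ with nonnegative integer coefficients and $\sum c_i \leq N$; let $M_A(n)$ denote the smallest such sum (and $\infty$ if $n \notin \mathcal{P}(A)$), with $M_{b-A}$ defined analogously. The bijection $a \leftrightarrow b-a$ identifies $NA$ with $bN - N(b-A)$, so $n \in NA \iff M_{b-A}(bN-n) \leq N$. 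The assumed counterexample thus forces $M_A(n) > N$ and $M_{b-A}(bN-n) > N$, both finite.

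Next I would fix canonical minimum representations of $n$ and $bN - n$, subject to maximizing the multiplicity of the top element in each (i.e.\ maximize $c_{\ell+1}$ in the $A$-representation of $n$, and analogously maximize the multiplicity of $b$ on the $b-A$ side). Writing $n = c_{\ell+1}b + n_0$ and $bN - n = e_0 b + m_0$, where $n_0, m_0$ involve only the $\ell$ inner generators $a_1,\dots,a_\ell$ and $b-a_1,\dots,b-a_\ell$ respectively, the minimality forces a rigid structure on the inner parts: no subsum of $n_0$ can be traded for strictly fewer copies of $b$, and similarly for $m_0$. Adding the two identities yields the balance $n_0 + m_0 = (N - c_{\ell+1} - e_0)b$, which tightly couples the two inner representations.

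The main obstacle is the exchange argument that produces a contradiction using $N \geq b - \ell$. Since the total number of summands exceeds $2N \geq 2(b-\ell)$, while the set $\{1,\dots,b-1\}$ contains only $b - \ell - 1$ integers outside $A$, pigeonhole should force a matchable pattern: typically a summand $a_i$ on the $n$-side together with a summand $b - a_j$ on the dual side whose combined contribution can be absorbed into a single copy of $b$, reducing both counts by one and violating minimality. Making this exchange precise---and verifying that it succeeds exactly at the threshold $N = b - \ell$, which is sharp as witnessed by $A = \{0, 1, b-1, b\}$ with $n = b - 2$---is where the technical heart of the proof will lie; I expect the argument may also need a minimum-counterexample reduction on the residue class $n \bmod b$ or on $|A|$ to handle cases where no single-pair exchange is directly available.
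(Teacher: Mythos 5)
Your setup is sound as far as it goes: the duality $n \in NA \iff bN - n \in N(b-A)$, the reduction to minimal representations, and the observation that a counterexample forces both $M_A(n) > N$ and $M_{b-A}(bN-n) > N$ are correct, and the paper's argument likewise splits $[0,bN]$ at $bN/2$ and handles the top half by exactly this symmetry. But the heart of your proof --- the exchange/pigeonhole step --- does not work as described, and what replaces it in the paper is substantially different. First, the count you propose to pigeonhole yields no contradiction: the two representations together have more than $2N$ summands, but they represent integers summing to $bN$ with each summand at most $b$, so nothing prevents this; and comparing the number of summands to the $b-\ell-1$ integers of $\{1,\dots,b-1\}$ outside $A$ compares quantities that never interact, since the summands all lie \emph{in} $A$. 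Second, the proposed exchange pairs a summand $a_i$ of $n$ with a summand $b-a_j$ of $bN-n$; these occur in representations of two different integers, so removing one summand from each changes both values, and there is no single identity in which the pair can be ``absorbed into a copy of $b$'' unless $a_i = a_j$, which need not ever happen.

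The missing idea is to work residue class by residue class mod $b$: every $n \notin \mathcal{E}(A)$ with $n \equiv a \pmod b$ has the form $n_{a,A} + kb$, and the entire difficulty is showing $n_{a,A}+kb \in NA$ for $k$ up to roughly $N/2$. The paper does this (Lemma \ref{L1}) by pigeonholing the partial sums of a minimal representation of $n_{a,A}$ against the sumset $kB$ of the reduction $B$ of $A$ modulo $b$, which requires a lower bound on $|kB|$ growing linearly in $k$; that bound comes from Kneser's theorem (Lemmas \ref{Lemma Kneser induction} and \ref{L.Kneser}), and a separate subsum argument (Lemma \ref{L2}) supplies the bound $N_{A}^* \leqslant b-\ell$ needed to start the induction, with several exceptional families checked by hand in the appendix. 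None of these ingredients appears in your sketch, and I do not see how your exchange, even made precise, would substitute for them; as written the proposal is a plausible opening that stops short of the theorem.
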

 \smallskip
 
A statement like Theorem \ref{thm 1} was first proved by Nathanson \cite{Na72}, but with the weaker bound $N \geqslant b^2 (\ell + 1)$. The bound was in turn improved to $N \geqslant \sum_{ a \in A, \, a \neq 0} (a-1)$ in \cite{WCC11}, and then to $N \geqslant 2 \lfloor \frac{b}{2} \rfloor$ in \cite{GS20}, where our bound $N\geq b-\ell$ was conjectured. \\

The bound ``$N \geqslant b-\ell$'' in Theorem \ref{thm 1} is tight, in that there are examples of sets $A$ for which   (\ref{eq structure theorem}) does not hold when $N = b-\ell - 1$. In particular there are the following families:
\begin{itemize}
\item $A = \{0,1,\dots,b\} \setminus \{a\}$ for some $a$ in the range $2 \leqslant a \leqslant b-2$. Here $ b-\ell - 1 = 1$ and $\mathcal{E}(A) = \mathcal{E}(b-A) = \emptyset$, but $a \not\in  A$, in contradiction to \eqref{eq structure theorem}.
\item $A = \{0,1,a+1,\dots,b-1,b\}$, for some $a$ in the range $2 \leqslant a \leqslant b-2$. Here $ b - \ell - 1 = a-1$ and $\mathcal{E}(A) = \mathcal{E}(b-A)= \emptyset$, but $a \not\in  (a-1)A$, contradicting \eqref{eq structure theorem}.
\end{itemize}
The previous bounds of \cite{WCC11} and \cite{GS20} were also tight for certain special values of $\ell$ and $b$, but our Theorem \ref{thm 1} is the first such bound for which tight examples exist for all $b \geqslant 4$ and for all $\ell$ in the range $2 \leqslant \ell \leqslant b-2$.

Moreover, it turns out that the families listed above are the only obstructions to improving Theorem \ref{thm 1}:

\begin{Theorem}
\label{thm 2}
Let $\ell \geqslant 1$ and $A = \{0 = a_0 < a_1 < \cdots < a_{\ell+1} = b\}$ be a finite set of integers with $\gcd(a_1,\dots,a_{\ell+1}) = 1$. If $N \geqslant \max(1,b-\ell - 1)$ then 
 \begin{equation*}
 NA = \{0,1,\dots,bN\} \setminus ( \mathcal{E}(A) \cup (bN - \mathcal{E}(b-A))),
 \end{equation*}
unless either $A$ or $b-A$ is a set in one of the two families listed above. 
 \end{Theorem}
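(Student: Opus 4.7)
The plan is to reduce to the single boundary case $N = b - \ell - 1$ (assuming $\ell \leqslant b-2$, since $\ell = b - 1$ forces $A = \{0,1,\ldots,b\}$ where the claim is trivial): Theorem~\ref{thm 1} already establishes \eqref{eq structure theorem} for $N \geqslant b - \ell$, and $\max(1, b - \ell -1) \leqslant b - \ell$. Suppose then, for contradiction, that $c$ is a counterexample at level $N = b - \ell - 1$: $c \in \{0,1,\dots,bN\}$, with $c \notin \mathcal{E}(A) \cup (bN - \mathcal{E}(b - A))$ but $c \notin NA$. Applying Theorem~\ref{thm 1} at level $N+1$ gives $c \in (N+1)A$; its hypothesis $b(N+1) - c \notin \mathcal{E}(b - A)$ is verified because $b(N+1) - c = b + (bN - c)$ and $b \in b - A$. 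Now, if any representation $c = x_1 + \cdots + x_{N+1}$ has a zero summand, we may delete it to obtain $c \in NA$, contradicting our assumption. Defining $\mu(c)$ to be the minimum number of positive summands from $A$ needed to represent $c$, we conclude $\mu(c) = N+1$ exactly, and the dual argument applied to $b-A$ gives $\mu'(bN - c) = N+1$.

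Fix a minimal representation $c = x_1 + \cdots + x_{N+1}$ with $1 \leqslant x_1 \leqslant \cdots \leqslant x_{N+1}$. The key structural constraint is \emph{irreducibility}: no collection of $k \geqslant 2$ of the $x_i$'s can be replaced by $\leqslant k -1$ elements of $A$ (this includes $x_i + x_j \notin A$ for all $i \neq j$, by taking $k = 2$). The clean case is when $x_i = 1$ for all $i$, so that $c = N + 1$; here irreducibility implies that for each $k \in \{2, \ldots, N+1\}$ we have $k \notin A$, for otherwise replacing $k$ ones by a single copy of $k$ would cut the count by $k - 1 \geqslant 1$. Thus $A \cap \{2, \ldots, N+1\} = \emptyset$; since $|A| = \ell + 2 = b - N + 1$ exactly matches $|\{0,1\} \cup \{N+2, \ldots, b\}|$, we conclude $A = \{0, 1\} \cup \{N+2, \ldots, b\}$, which is the second excluded family with $a = N + 1$. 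Dually, if the minimal representation of $bN - c$ in $b - A$ consists entirely of ones, the same argument places $b - A$ in family~2.

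The heart of the proof—and the main obstacle I anticipate—is the remaining case in which neither minimal representation is all-ones, so $c \geqslant N + 2$ and $bN - c \geqslant N + 2$. I would handle this by a joint exploitation of the irreducibility constraints on both representations together with the tight count $|A| = b - N + 1$: the pairwise conditions $x_i + x_j \notin A$ and their dual counterparts force a very restricted set of ``holes'' in $\{0, 1, \ldots, b\} \setminus A$, and the plan is to show that these force $A$ (or $b - A$) into family~1. A promising route is a \emph{descent} or \emph{exchange} argument: starting from a minimal representation with $x_{N+1} \geqslant 2$, attempt either to combine two summands (contradicting $\mu(c) = N + 1$) or to rewrite the representation, using an element of $A$ smaller than $x_{N+1}$ together with a compensating adjustment, so as to decrease $x_{N+1}$ while preserving minimality; iterating, one either reaches the all-ones case already handled or else the obstructions align so that the hole pattern of $A$ becomes a single element, matching family~1. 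The careful combinatorial bookkeeping—ensuring that every admissible $A$ not in either family is excluded, and also correctly handling the low-$N$ base cases (notably $N = 1$, which maps directly onto family~1)—is where the bulk of the technical work lies.
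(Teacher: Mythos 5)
Your reduction to the single level $N=b-\ell-1$, the use of Theorem \ref{thm 1} at level $N+1$ to produce an irreducible $(N+1)$-term representation of a putative counterexample $c$, and the complete treatment of the all-ones case (forcing $A=\{0,1\}\cup\{N+2,\dots,b\\}$, the second family) are all correct, and this is a genuinely different strategy from the paper, which instead runs the Kneser-theorem machinery of Corollary \ref{corr1} with $\Delta=1$ (using $|2B|\geqslant\min(b,\ell+4)$ from Lemma \ref{L.Kneser} and $N_A^*\leqslant b-\ell-1$ from Lemma \ref{L2}, each with classified exceptions) and then disposes of the exceptional families one by one in Appendix \ref{section checking examples}. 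However, your proof has a genuine gap: the case in which neither minimal representation is all-ones is exactly the heart of the theorem, and you offer only a plan for it (``I would handle this by\dots'', ``a promising route is\dots''), not an argument. Nothing in the proposal establishes that the descent terminates, nor that its terminal configurations are confined to the two excluded families.

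Moreover, the specific shape you predict for the endgame is not right. You assert that the irreducibility constraints should force the hole pattern of $A$ (or $b-A$) down to a single missing element, i.e.\ family 1. But there are sets far from both families for which the conclusion at $N=b-\ell-1$ is true yet only barely so, and for which minimal representations are highly non-generic: for instance $A=\{0,h,b-h,b\}$ with $2\leqslant h<b/2$ and $(h,b)=1$ has $\ell=2$ and $b-3$ holes, every $n_{a,A}$ is a sum of all-$h$'s or all-$(b-h)$'s, and the paper's verification (Case \ref{Case fourth alpha equals 2}) needs a two-parameter covering argument that works precisely down to $N\geqslant b-1-h$. Similar delicacy occurs for $A=\{0,a,a+\tfrac b2,b\}$ and the other families in Lemma \ref{L.Kneser}. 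Your descent would have to derive a contradiction for all such $A$ by combinatorial exchanges alone, and as sketched it gives no mechanism for doing so; in the paper this role is played by Kneser's theorem (to guarantee $|kB|$ grows by $2$ per step) together with explicit constructions in the appendix. Until the non-all-ones case is actually carried out, the proposal does not prove Theorem \ref{thm 2}.
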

 
Our goal in proving Theorems \ref{thm 1} and \ref{thm 2} was to establish tight bounds in the venerable Frobenius postage stamp problem. These bounds can now be applied to what we hope is a cornucopia of questions in additive combinatorics (for example, Corollary 1.8, Lemma 5.1, and Lemma 5.2 in  \cite{ILMZ12}) where explicit tight bounds are needed.

Our methods also show that, if $b \geqslant 9$ and $\ell \geqslant 5$, then \eqref{eq structure theorem} holds for all $N \geqslant \max(1,b-\ell - 2)$ unless $A$ or $b-A$ belong to one of the two families listed above or one of the following  new families:
\begin{itemize}
\item $A = \{0,1,b\} \cup (\{a+1,\dots,b-1\} \setminus \{d\})$ for some $a$ in the range $2\leqslant a \leqslant b-2$ and some $d$ in the range $a+2\leqslant d \leqslant b-1$, where $a\not\in (a-1)A$;
\item $A = \{ 0,1,\dots, b\}\setminus \{ a,c\}$ for some $2 \leqslant a,c \leqslant b-2$, where $a\not\in A$;
\item $A = \{ 0,1,2,6,\dots,b\}$, where $5\not\in 2A$;
\item $A = \{ 0,1,3,6,\dots,b\}$, where $5\not\in 2A$. 
\end{itemize}
Indeed, our proofs are sufficiently flexible that one can go on and prove that \eqref{eq structure theorem} holds for all $N \geqslant \max(1,b-\ell - \Delta)$, for ever larger values of $\Delta$, except in some explicit finite set of families of sets $A$, though  the number of cases seems to grow prohibitively with $\Delta$.  \\

The final parts of the proofs of Theorems \ref{thm 1} and \ref{thm 2} come in Section \ref{section main proofs}. These will rely on a number of auxiliary lemmas and use some terminology from \cite{GS20}, all of which we will introduce in the preceding sections. There are a few families of examples, like $A = \{0,h,b-h,b\}$ with $(h,b)=1$, for which our general arguments for Theorems \ref{thm 1} and \ref{thm 2} fail, and for these examples
we verify the theorems  explicitly in Appendix \ref{section checking examples}.\\

 \section{Placing elements in $NA$}
\label{section main lemma}
Throughout we fix a set $A \subset \mathbb{Z}$ with minimum element $0$ and maximum element $b$, where
$A\setminus \{ 0,b\}$ has $\ell$ elements, and $\text{gcd}(a: a\in A)=1$.  Let $B$ be the reduction of $A \pmod b$ so that $\vert B\vert = \ell + 1$, and its elements can be represented by $A \setminus \{b\}$.

For  $a$ in the range $1 \leqslant a \leqslant b-1$ we write 
\[ 
n_{a,A}: = \min\{n \geqslant 1: n \in \mathcal{P}(A), \, n \equiv a \pmod b\}
\] 
and 
\[
N_{a,A}: = \min\{N \geqslant 1: n_{a,A} \in NA\}, \text{ with }   N_{A}^*: = \max\limits_{ 1 \leqslant a \leqslant b-1} N_{a,A}.
\]
We always have $N_{a,A}\leq b-1$ for if not we write $n_{a,A}=a_1+\dots+a_N$ with each $a_i\in A$ and $N=N_{a,A}$. Then at least two of $b+1$ subsums
\[
0,a_1,a_1+a_2,\dots , a_1+\dots+a_b
\]
must be congruent mod $b$, say $a_1+\dots+a_i\equiv a_1+\dots+a_j \pmod b$ with $i<j$, and then
\[
a_1+\dots+a_i+a_{j+1}\dots+a_N \equiv a_1+\dots+a_N=n_{a,A}\equiv a \pmod b,
\]
contradicting the minimality of $n_{a,A}$.

 It was observed in \cite{GS20} that 
\begin{equation}
\label{eq structure of EA}
 \mathcal{E}(A) = \bigcup\limits_{a=1}^{b-1} \{n \geqslant 1: n < n_{a,A}, \, n \equiv a \pmod b\}
 \end{equation} 
 so that $  \{0,1,\dots,bN\} \setminus (\mathcal{E}(A) \cup (bN - \mathcal{E}(b-A))) $ equals
 \[ 
  \bigcup \limits_{a=1}^{b-1} \{n: n_{a,A} \leqslant n \leqslant bN - n_{b-a,b-A}, \, n \equiv a \pmod b\}.
  \] 
  Therefore the equality \eqref{eq structure theorem} holds if and only if the arithmetic progressions
\begin{equation}
\label{eq aritmetic progressions}
\{n: n_{a,A} \leqslant n \leqslant bN - n_{b-a,b-A}, \, n \equiv a \pmod b\}, \qquad 1 \leqslant a \leqslant b-1
\end{equation}
are contained in $NA$. Our first lemma shows that, under certain conditions on the sumset of $B$, elements of the arithmetic progressions in \eqref{eq aritmetic progressions} do belong to $NA$.

 \begin{Lemma} \label{L1}
Let $a$ be in the range $1 \leqslant a \leqslant b-1$, let $k \geqslant 1$ and suppose that $ |kB| \geqslant b - N_{a,A}$. Then $n_{a,A}+(k-1)b\in NA$ whenever $N\geqslant 2k+b-|kB| - 1$.
\end{Lemma}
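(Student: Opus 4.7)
The plan is to work with the minimal representation $n_{a,A}=c_1+\cdots+c_{N_{a,A}}$ of the target element $M:=n_{a,A}+(k-1)b$. First observe that every $c_j$ lies in $A\setminus\{0,b\}$: a summand equal to $0$ could be dropped (shrinking the representation), and a summand equal to $b$ could be dropped to produce $n_{a,A}-b<n_{a,A}$, still in $\mathcal{P}(A)$ and in the residue class $a\pmod b$, contradicting the minimality of $n_{a,A}$. Let $t_i=c_1+\cdots+c_i$ and $r_i=t_i\bmod b$; by the same excision argument used in the excerpt to establish $N_{a,A}\le b-1$, the residues $r_0,\ldots,r_{N_{a,A}}$ are pairwise distinct in $\mathbb{Z}/b\mathbb{Z}$.

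Next, I apply a pigeonhole. The residue set $R=\{r_0,\ldots,r_{N_{a,A}}\}$ has size $N_{a,A}+1$, and the hypothesis $|kB|\ge b-N_{a,A}$ gives $|R|+|kB|\ge b+1$, so $R\cap kB\ne\emptyset$. Pick an index $i$ with $r_i\in kB$ and choose a representation $r_i\equiv e_1+\cdots+e_k\pmod b$ with each $e_j\in B\subset A$; set $M_i:=e_1+\cdots+e_k\in kA$, $\beta_i=(t_i-r_i)/b\ge 0$, and $\alpha_i=(M_i-r_i)/b\ge 0$. Then
\[
M \;=\; M_i \;+\; (c_{i+1}+\cdots+c_{N_{a,A}}) \;+\; (k-1+\beta_i-\alpha_i)\, b
\]
is a legitimate representation of $M$ as a sum of $2k-1+(N_{a,A}-i)+(\beta_i-\alpha_i)$ elements of $A$, \emph{provided} $\alpha_i\le k-1+\beta_i$ so that the coefficient of $b$ is nonnegative; padding with zeros then pushes the representation length up to any larger $N$.

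The proof therefore reduces to selecting the index $i$ and the representation $M_i$ so that $(N_{a,A}-i)+(\beta_i-\alpha_i)\le b-|kB|$, equivalently $(i-\beta_i)+\alpha_i\ge N_{a,A}+|kB|-b$. This is the main obstacle. A useful identity is $i-\beta_i=\lceil \sigma_i/b\rceil$, where $\sigma_i=\sum_{j\le i}(b-c_j)$ is the ``deficit'' of the prefix (so $i-\beta_i\ge 1$ for every $i\ge 1$); and whenever $r_i\in B\subset kB$ the trivial representation $M_i=(k-1)b+r_i$ already achieves the maximal value $\alpha_i=k-1$. I expect to close the argument by taking the largest $i$ with $r_i\in kB$ (or averaging over the at least $N_{a,A}+|kB|+1-b$ such indices), with some extra care when the only good index is $i=0$ or when the good $r_i$ lies in $kB\setminus B$ and so forces $\alpha_i<k-1$.
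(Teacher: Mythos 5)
Your setup (prefix sums of a minimal representation, distinctness of their residues, pigeonhole against $kB$, and recombination with copies of $b$) is exactly the skeleton of the paper's proof, but the proposal stops short at precisely the point you flag as ``the main obstacle,'' and the two ingredients needed to close it are both missing. First, the index selection. The paper resolves this by applying the pigeonhole not to all $N_{a,A}+1$ prefix sums but only to the last $b-|kB|+1$ of them, i.e.\ the subsums $t_M,\dots,t_L$ with $M=N_{a,A}-(b-|kB|)$; the hypothesis $|kB|\geqslant b-N_{a,A}$ is there exactly to guarantee $M\geqslant 0$, and any good index $m$ found this way automatically satisfies $L-m\leqslant b-|kB|$. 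Your alternative of taking the largest good index $i$ does in fact work --- the residues $r_{i+1},\dots,r_L$ are distinct and all avoid $kB$, so there are at most $b-|kB|$ of them --- but you only state this as an expectation rather than verifying it, and your fallback suggestions (averaging over good indices, special-casing $r_i\in B$) are not needed and would not substitute for this observation.

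Second, and more seriously, even with the right index your length count $2k-1+(N_{a,A}-i)+(\beta_i-\alpha_i)$ is only $\leqslant 2k+b-|kB|-1$ if $\alpha_i\geqslant\beta_i$, and you never establish this: you record $\alpha_i\geqslant 0$ and $\beta_i\geqslant 0$ separately, which is not enough. The paper's argument is that
\[
M_i+(c_{i+1}+\cdots+c_{N_{a,A}})\;=\;n_{a,A}+(\alpha_i-\beta_i)\,b
\]
lies in $\mathcal{P}(A)$ and is a positive integer congruent to $a\bmod b$, so $\alpha_i-\beta_i<0$ would contradict the minimality of $n_{a,A}$. (The companion bound $\alpha_i\leqslant k-1$, which you do essentially have since each $e_j<b$, is what makes the coefficient $k-1+\beta_i-\alpha_i$ of $b$ nonnegative, so validity of the representation is fine.) Without the inequality $\alpha_i\geqslant\beta_i$ the term $\beta_i-\alpha_i$ could be positive and the representation could be too long, so this is a genuine gap rather than a routine omission; supplying it, together with the index argument above, completes your proof along essentially the paper's lines.
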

 
 \begin{proof} Suppose that 
 \[
 n_{a,A} = a_1+\dots+a_L \text{ where } L:=N_{a,A},
 \] for some $a_i \in A$ not necessarily distinct. 
 Consider then the set $\mathcal M$ of subsums
 \[
 a_1+\dots+a_M, a_1+\dots+a_{M+1},\dots, a_1+\dots+a_{L}
 \]
 where $M=N_{a,A}-(b-|kB|)$, where if $M = 0$ we consider the first (empty) sum to be equal to $0$. 
 
We make several observations. First, since $b \geqslant \vert kB\vert \geqslant b-N_{a,A}$ we have $N_{a,A} \geqslant M \geqslant 0$, so the construction of $\mathcal{M}$ is valid. Second, we observe that the members of $\mathcal{M}$ are distinct mod $b$ by the definition of $n_{a,A}$. To justify this second part, we note that if two members of $\mathcal{M}$ were the same modulo $b$ then there would be a subsum of $a_1 + \cdots + a_L$ congruent to $0$ mod $b$, say $\sum_{s \in S} a_s$. Furthermore we know that $a_s \geqslant 1$ for all $s$, by the minimality of $N_{a,A}$. But then $n: = n_{a,A} - \sum_{s \in S} a_s$ satisfies $n < n_{a,A}$, $n \equiv a$ mod $b$, and $n \in \mathcal{P}(A)$, which contradicts the minimality of $n_{a,A}$. 
 
 Now $|\mathcal M|+|kB|\geqslant b+ 1$ and the elements of  $\mathcal M$ are distinct mod $b$.  Therefore, by the pigeonhole principle, there exists an integer $m\in [M,L]$ for which 
 \[
 a_1+\dots+a_m\in kB \mod b;
 \]
that is, there exists an integer $i$ and $b_1,\dots, b_k\in A\setminus \{b\}$ for which 
\[    a_1+\dots+a_m+ib = b_1+\cdots + b_k .\]
We may extract some bounds for $i$. Indeed, note that $ib \leqslant a_1+\dots+a_m+ib = b_1+\cdots + b_k<kb$ and so $i \leqslant k-1$. Also
\[
n_{a,A}+ib= (a_1+\dots+a_m+ib) +(a_{m+1}+\dots+a_L)  = (b_1+\cdots + b_k)+(a_{m+1}+\dots+a_L) \in \mathcal P(A)
\]
and so $i\geqslant 0$ by the minimality of $n_{a,A}$.

Therefore
\begin{align*}
n_{a,A}+(k-1)b&=(b_1+\cdots + b_k)+(a_{m+1}+\dots+a_L)  +(k-1-i)b\\
&\in k(A\setminus \{b\}) + (N_{a,A}-m)A+(k-1-i)b\\
&\subset (k+(b-|kB|)+k - 1)A\subset NA,
\end{align*}
since $i\geqslant 0$ and $N_{a,A}-m\leqslant N_{a,A}-M=b-|kB|$.
\end{proof}

We will combine this lemma with some lower bounds on the growth of the sumset $\vert kB\vert$. Our main tool is Kneser's theorem \cite[Theorem 5.5]{TV06}, which states that if $U,V$ are subsets of a finite abelian group $G$ then
 \[
 |U+V|\geqslant |U+H|+ |V+H| -|H|
 \]
 where $H=H(U+V)$ is the \emph{stabilizer} of $U+V$, defined in general by \[H(W):=\{ g\in G: g+W=W\}.\] 
One notes in particular that $V+H$ is a union of cosets of $H$, so its size is a multiple of $|H|$. Therefore if $0\in V$ but $V\not\subset H$ then $|V+H|-|H|\geqslant |H|$.
\begin{Lemma}
\label{Lemma Kneser induction}
Assume that $\ell \geqslant 2$. For all $k \geqslant 2$, $\vert kB\vert \geqslant \min(b, \vert (k-1)B\vert +2)$.
\end{Lemma}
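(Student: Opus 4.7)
The plan is to apply Kneser's theorem to $U := (k-1)B$ and $V := B$, regarded as subsets of the cyclic group $\mathbb{Z}/b\mathbb{Z}$. Writing $H := H(kB)$ for the stabilizer of $kB = U+V$, Kneser yields
\[
|kB| \;\geq\; |(k-1)B + H| + |B + H| - |H|,
\]
and I would then split into three cases according to $H$.

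If $H = \{0\}$, the bound collapses to $|kB| \geq |(k-1)B| + |B| - 1 = |(k-1)B| + \ell$, which is at least $|(k-1)B| + 2$ because $\ell \geq 2$. This is the only place where the hypothesis $\ell \geq 2$, equivalently $|B| \geq 3$, is used.

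If $H$ is a proper non-trivial subgroup of $\mathbb{Z}/b\mathbb{Z}$, I would invoke the fact that $B$ generates the full group: since $b \in A$ and $\gcd(a : a \in A) = 1$, we have $\gcd(a_1, \ldots, a_\ell, b) = 1$, so the subgroup of $\mathbb{Z}/b\mathbb{Z}$ generated by $B$ is all of $\mathbb{Z}/b\mathbb{Z}$. In particular $B \not\subset H$, and because $0 \in B$ the set $B + H$ contains the coset $H$ together with at least one further coset, so $|B + H| \geq 2|H|$ and hence $|B + H| - |H| \geq |H| \geq 2$. Combined with the trivial bound $|(k-1)B + H| \geq |(k-1)B|$, Kneser gives $|kB| \geq |(k-1)B| + 2$.

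Finally, if $H = \mathbb{Z}/b\mathbb{Z}$, then $kB$ is a non-empty subset stabilized by the whole group, forcing $kB = \mathbb{Z}/b\mathbb{Z}$ and hence $|kB| = b$, which matches the other alternative in the required minimum. The argument is essentially bookkeeping around Kneser's theorem; the one delicate step is the proper non-trivial stabilizer case, where one must use that $B$ generates $\mathbb{Z}/b\mathbb{Z}$ in order to exclude the possibility $B \subset H$ and thereby extract the crucial increment of $|H| \geq 2$ on the $B$-side of the inequality.
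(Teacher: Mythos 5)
Your proposal is correct and follows essentially the same route as the paper: apply Kneser's theorem with $U=(k-1)B$, $V=B$, and split on whether the stabilizer $H(kB)$ is trivial, proper non-trivial, or all of $\mathbb{Z}/b\mathbb{Z}$, using that $B$ generates $\mathbb{Z}/b\mathbb{Z}$ to get $|B+H|-|H|\geqslant |H|\geqslant 2$ in the middle case and $\ell\geqslant 2$ in the trivial-stabilizer case. No gaps.
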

\begin{proof}
By Kneser's theorem we have
 \begin{align*}
 |k B|&\geqslant |(k-1)B+H(k B)|+|B+H(k B)|-|H(k B)|\\
 &\geqslant \vert (k-1)B\vert  +|B+H(k B)|-|H(k B)|. 
\end{align*}
 If $H(k B)=\mathbb Z/b\mathbb Z$ then $\vert k B\vert =b$ and we are done, so we may assume that $H(kB)$ is a proper subgroup of $\mathbb{Z}/b\mathbb{Z}$. Since $B$ generates all of $\mathbb Z/b\mathbb Z$ we see that $B\not\subset H(k B)$ 
 and so $|B+H(k B)|-|H(k B)|\geqslant |H(k B)|$.  Therefore if $H(k B)\ne \{ 0\}$ then $|k B|\geqslant |(k -1)B|+2$. If on the other hand we have $H(k B)= \{ 0\}$ then \[|k B|\geqslant |(k -1)B|+|B|-1=|(k-1)B|+\ell\geqslant \vert (k-1)B\vert + 2\] since $\ell \geqslant 2$. 
 \end{proof}

We make a deduction, phrased in a suitably general way so as to apply in the setting of both Theorems \ref{thm 1} and \ref{thm 2}. 

\begin{Corollary} \label{corr1}
Assume that $\ell \geqslant 2$, and let $N=b-\ell-\Delta$ for some $\Delta \geqslant 0$. Let $K$ be the smallest integer such that $K\geqslant 2$ and $\vert KB\vert \geqslant \min(b,2K + \ell + \Delta - 1)$, and assume that $N \geqslant N_A^* + K - 2$. Then $n \in NA$ for all $n \leqslant bN/2$ with $n \notin \mathcal{E}(A)$. 
\end{Corollary}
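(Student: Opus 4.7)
The plan is, for each residue class $a$ with $1 \leq a \leq b-1$, to show that every $n \equiv a \pmod b$ with $n_{a,A} \leq n \leq bN/2$ lies in $NA$; by the progression description \eqref{eq aritmetic progressions}, this suffices. Such $n$ are precisely $n_{a,A} + (k-1)b$ for integers $1 \leq k \leq k_{\max}$, where $k_{\max}$ is the largest such integer; since $n_{a,A} \geq 1$ one has $k_{\max} \leq \lceil N/2 \rceil$, so in particular $2k_{\max}-1 \leq N$.

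I would cover $[1,k_{\max}]$ by two complementary methods. The first is trivial padding: since $n_{a,A} \in N_{a,A}A$ and $b \in A$, we have $n_{a,A} + (k-1)b \in (N_{a,A} + k-1)A \subseteq NA$ for every $k \leq N - N_{a,A} + 1$. The second is Lemma \ref{L1}, which I would apply for $k \in [N - N_{a,A} + 2, k_{\max}]$; these two ranges clearly cover $[1,k_{\max}]$.

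The hypothesis $N \geq N_A^* + K - 2 \geq N_{a,A} + K - 2$ gives $N - N_{a,A} + 2 \geq K$, so each $k$ in the Lemma \ref{L1} range satisfies $k \geq K$. Iterating Lemma \ref{Lemma Kneser induction} upwards from $K$ then yields $|kB| \geq \min(b, 2k + \ell + \Delta - 1)$ for every $k \geq K$. A short case analysis verifies the two hypotheses of Lemma \ref{L1}. The condition $|kB| \geq b - N_{a,A}$ is automatic when $|kB| = b$, and otherwise reduces to $2k \geq N + 1 - N_{a,A}$, which is immediate since $k \geq N - N_{a,A} + 2$. The condition $N \geq 2k + b - |kB| - 1$ is automatic when $|kB| \geq 2k + \ell + \Delta - 1$, and in the remaining case (when $|kB| = b$) reduces to $N \geq 2k - 1$, which holds for $k \leq k_{\max}$.

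The main obstacle is the compatibility of these two methods: the hypothesis $|kB| \geq b - N_{a,A}$ in Lemma \ref{L1} is sharpest when $N_{a,A}$ is smallest, but that is exactly the regime where trivial padding covers the widest range of $k$. The role of the hypothesis $N \geq N_A^* + K - 2$ is precisely to calibrate this interface, ensuring that Kneser iteration has made $|kB|$ large enough by the time trivial padding runs out.
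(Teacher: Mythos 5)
Your proposal is correct and follows essentially the same route as the paper's own proof: trivial padding by copies of $b$ for $k \leqslant N - N_{a,A} + 1$, then Lemma \ref{L1} for larger $k$ (which the hypothesis $N \geqslant N_A^* + K - 2$ forces to be at least $K$), with the bound $\vert kB\vert \geqslant \min(b, 2k+\ell+\Delta-1)$ obtained by iterating Lemma \ref{Lemma Kneser induction} from the definition of $K$. The only differences are cosmetic (your $k$ is the paper's $k+1$, and you organise the final case analysis per hypothesis rather than per branch of the minimum), and your verification of both hypotheses of Lemma \ref{L1} matches the paper's.
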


 \begin{proof} We will show that $n_{a,A} + kb \in NA$ for all $k< N/2$
and all $a$ in the range $1 \leqslant a \leqslant b-1$, which implies the result, by  \eqref{eq structure of EA}. 

Note that $N \geqslant N_A^* \geqslant N_{a,A}$. Therefore if $0 \leqslant k \leqslant N - N_{a,A}$ we have $n_{a,A} + kb \in N_{a,A} A + (N-N_{a,A})A = NA$, so without loss of generality we may assume that $k \geqslant N-N_{a,A} + 1$, so that $k+1\geqslant N-N_{a,A} + 2 \geqslant N-N_A^* + 2 \geqslant K$. 

From Lemma \ref{Lemma Kneser induction} and induction, this means that $\vert (k+1)B\vert \geqslant \min(b,2(k+1) + \ell + \Delta - 1)$. 
Our goal is to apply Lemma \ref{L1} with $k$ replaced by $k+1$ so we need to verify its hypotheses:
\begin{itemize}
\item If $\vert (k+1)B\vert = b$ then $\vert (k+1)B\vert \geqslant b-N_{a,A}\geqslant N-N_{a,A}$ trivially, and
\[ 2(k+1) + b - \vert (k+1)B\vert - 1 = 2k + 1\leqslant N\] since $k<\frac N2$;
\item Otherwise  $|(k+1)B|\geqslant 2(k+1)+\ell+\Delta - 1$, and so we have both
 \[2(k+1)+b-|(k+1)B|-1\leqslant b-\ell-\Delta=N\] and 
 \[
 |(k+1)B|\geqslant 2(N-N_{a,A}+2)+\ell - 1+\Delta\geqslant N+\ell+\Delta-N_{a,A}+3=b-N_{a,A}+3
 \]
 as $N\geqslant N_{a,A}$. 
 \end{itemize}
Therefore Lemma \ref{L1} implies that $n_{a,A} + (k+1 - 1)b = n_{a,A} + kb \in NA$, as desired. 
 \end{proof}

 \section{Bounds on $\vert 2B\vert$ and $N_{A}^*$}
 
In order to use Corollary \ref{corr1}, two further bounds will be useful: a lower bound on $\vert 2B\vert$ and an upper bound on $N_{A}^*$.  We will achieve both of these objectives in this section (bar a few special cases which we will deal with separately).

  \begin{Lemma} \label{L.Kneser} 
  Suppose that $B$ is a subset of $\mathbb Z/b\mathbb Z$ which contains $0$, generates all of $\mathbb Z/b\mathbb Z$, and has $\ell\geqslant 2$ non-zero elements. Then $|2B|\geqslant \min( b,\ell+3)$. Moreover $|2B|\geqslant \min( b,\ell+4)$, except in the following  families of examples (where   $A=B\cup\{ b\}$ for later convenience):
\begin{itemize}
\item $A = \{0 < h< 2h < b\}$ with $(h,b) = 1$; 
\item $A = \{0 < 2h - b < h < b\}$ with $(h,b) = 1$;
\item $A = \{0<h<b-h < b\}$ with $(h,b) = 1$;
\item $A = \{0 < h,\frac{b}{2} < b\}$ with $(h,\frac{b}{2}) =1$;
\item $A = \{0 < h < h + \frac{b}{2} < b\}$ with $(h,\frac{b}{2}) =1$;
\item $A = \{0 < h < b/2 < h + \frac{b}{2} < b\}$ with $(h,\frac{b}{2}) =1$.
\end{itemize}
\end{Lemma}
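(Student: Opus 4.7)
My plan is to apply Kneser's theorem to the sumset $B+B$ and case-split on its stabilizer $H = H(2B)$. Writing $d = |H|$ and letting $s = |B+H|/|H|$ be the number of cosets of $H$ meeting $B$, Kneser's theorem gives $|2B| \geq 2|B+H| - |H| = (2s-1)d$, while the inclusion $B \subseteq B+H$ gives $sd \geq |B| = \ell+1$. Since $0 \in B$ and $B$ generates $\mathbb{Z}/b\mathbb{Z}$, whenever $H$ is a proper subgroup we have $B \not\subseteq H$, forcing $s \geq 2$. If $H = \mathbb{Z}/b\mathbb{Z}$, then $2B = \mathbb{Z}/b\mathbb{Z}$ and both desired bounds hold trivially.

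For the first claim $|2B| \geq \min(b, \ell+3)$, I would handle the remaining cases quickly: if $d = 1$, then $|2B| \geq 2\ell + 1 \geq \ell + 3$ since $\ell \geq 2$; if $d \geq 2$ and $s = 2$, then $|2B| \geq 3d \geq 6$, and $d \geq (\ell+1)/2$ pushes this past $\ell+3$ in the relevant regime; if $s \geq 3$ and $d \geq 2$, then $|2B| \geq 5d \geq 10$, more than enough.

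For the sharper claim $|2B| \geq \min(b, \ell + 4)$ together with the exception list, the case $d = 1$ with $\ell \geq 3$ is immediate from $2\ell+1 \geq \ell+4$. For $\ell = 2$ I would write $B = \{0, a, c\}$ and enumerate the non-trivial coincidences among the six formal sums $\{0, a, c, 2a, a+c, 2c\}$ modulo $b$: those not forcing $a = 0$, $c = 0$, or $a = c$ reduce to exactly the six possibilities $2a \equiv c$, $2c \equiv a$, $a+c \equiv 0$, $2a \equiv 0$, $2c \equiv 0$, and $2a \equiv 2c \pmod{b}$. After reparametrising via $h = a$ or $h = c$ and transferring the generation condition to the appropriate $\gcd$ constraint, these correspond exactly to families 1--5 (for instance, $2a \equiv c$ yields $\{0, h, 2h, b\}$, and $2a \equiv 0$ or $2c \equiv 0$ both give family 4 with $b$ even).

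For $d \geq 2$, the joint constraints $sd \geq \ell+1$ and $(2s-1)d < \ell+4$ leave very little room. With $s \geq 3$ and $d \geq 2$ one has $|2B| \geq 10$, plenty for $\ell \leq 6$; for $\ell \geq 7$ the bound $sd \geq \ell+1$ forces $d$ large enough that $(2s-1)d \geq \ell+4$. With $s = 2$ one has $d \geq (\ell+1)/2$, so $|2B| \geq 3(\ell+1)/2$, clearing $\ell+4$ whenever $\ell \geq 5$; the residual cases $\ell \leq 4$ are checked directly, and the only genuine exception is $\ell = 3$ with $d = 2$, where $|B| = 4 = sd$ forces $B = H \cup (g+H) = \{0, b/2, g, g+b/2\}$ for some $g$ with $(g, b/2) = 1$, which is family 6. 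The main obstacle will be the $\ell = 2$ enumeration: ensuring exhaustiveness, correctly translating each coincidence to its family with the correct $\gcd$ constraint, and handling overlapping coincidences (which make $|2B| < 5$ but still keep $B$ within the listed families). A secondary subtlety is verifying, in each candidate extremal configuration for the $d \geq 2$ case, that the stabilizer one is working with really equals $H(2B)$ and not something strictly larger.
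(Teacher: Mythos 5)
Your proposal is correct and follows essentially the same route as the paper: Kneser's theorem applied to $2B$, a trichotomy on the stabilizer $H(2B)$, an enumeration of coincidences among $\{0,a,c,2a,a+c,2c\}$ in the $\ell=2$ trivial-stabilizer case yielding families 1--5, and a coset-counting argument for $|H|\geqslant 2$ isolating the $\ell=3$, $|H|=2$ configuration of family 6. The only differences are cosmetic (you track $s=|B+H|/|H|$ where the paper splits on whether $B$ is a union of $H$-cosets, and your remark that ``$sd\geqslant\ell+1$ forces $d$ large'' in the $s\geqslant 3$ case is better phrased as $(2s-1)d\geqslant sd+(s-1)d\geqslant(\ell+1)+4$), so no substantive gap remains.
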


\begin{proof}  We aim to prove that $|2B|\geqslant \min( b,\ell+3 + \Delta)$ for $\Delta = 0$ or $1$.
By Kneser's theorem we have 
\[ 
|2B|\geqslant 2|B+H|-|H|,
\] 
where $H=H(2B)$.
 If $\vert H\vert = b$ then $\vert 2B\vert = b$ and we are done. 
 
 If $H= \{ 0\}$ then we derive $\vert 2B\vert \geqslant 2\ell+1\geqslant \ell+3+\Delta$, provided 
 $\ell \geqslant 2+\Delta$. Therefore we are done unless $\Delta=1$ and $\ell=2$ with  $\vert 2B\vert \leqslant 5$.
 In this case $B=\{ 0,h,k\}$ with $(h,k,b)=1$ and at least two of $0,h,k,2h,h+k,2k$ must be congruent mod $b$.
 One obtains the first five families of examples in the result from a case-by-case analysis (letting $k=2h,2h-b,b-h,\frac b2$ and $h+\frac b2$, respectively).
 
  Now we may assume that $2 \leqslant \vert H\vert \leqslant b-1$. If  $B$ is not a union of $H$-cosets then $\vert B + H\vert \geqslant \vert B\vert + 1 = \ell + 2$. Also, since $B$ generates $\mathbb{Z}/b\mathbb{Z}$ and $\vert H\vert \neq b$ we have $B \not\subset H$, and so $|B+H|\geqslant 2|H|$. Thus
\[
|2B|\geqslant |B+H|+( |B+H|-|H|) \geqslant   \ell+2 +|H| \geqslant\ell+4.
\] 
 
Finally assume that $B$ is the union of $r$ $H$-cosets with $r\geqslant 2$, so that  
\[
|2B|\geqslant (2r-1)|H| = (2-\tfrac 1r)|B|= (2-\tfrac 1r)(\ell+1).
\]
  This is at least $\ell+3 + \Delta$  unless $\ell < \tfrac{r}{r-1} (1 + \frac{1}{r} + \Delta)$, where $r,\ell\geq 2$ and $r$ is a proper divisor of $\ell+1$, and so $\ell\ne 2$ or $4$. 
  For $\Delta=0$ this implies   $\ell<3$, which is impossible. If $\Delta=1$ the inequality implies    
  $\ell < 2+\tfrac{3}{r-1}\leq 5$, so that the only possibility is $\ell=3$ and $r=2$, so that $|H|=2$. Therefore
  $b$ is even,  $H=\{ 0,\tfrac b2\}$ and we obtain the sixth family of examples.
   \end{proof}

We now present some bounds on $N_{A}^*$. 

  \begin{Lemma} \label{L2} Suppose that $2 \leqslant \ell \leqslant b-2$. Then we have $N_{A}^* \leqslant b-\ell - 1$, except when:
\begin{itemize}
\item $A=\{ 0,1,\dots, b\} \setminus \{ a\}$ for some $a$ in the range $1 \leqslant a \leqslant b-1$, in which case $N_{a,A}=2$ and $N_A^* = 2 = b-\ell$; or when
\item $A=\{ 0,1,a+1,\dots,b-1,b\} $ for some $a$ in the range $2 \leqslant a \leqslant b-2$, in which case $n_{a,A}=a\times 1,\ N_{a,A}=a=b-\ell$ and $N_{A}^* = b-\ell$.
\end{itemize} 
  \end{Lemma}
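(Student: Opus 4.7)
My plan is to pick any $a$ with $N_{a,A} \geqslant b - \ell$, take a minimal representation $n_{a, A} = a_1 + \dots + a_L$ (with $L = N_{a, A}$ and $a_i \in A \setminus \{0, b\}$), and use the constraint already proved in the paper---no nonempty proper subset of $\{a_1, \ldots, a_L\}$ sums to $0 \pmod b$---to pin down the structure of $A$. The argument splits on whether $1 \in A$.

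Suppose first that $1 \in A$; then $n_{a, A} = a$ (since $a \cdot 1 \in \mathcal{P}(A)$), and a minimal representation takes the form $a = k \cdot 1 + c_1 + \dots + c_m$ with $c_j \in A \cap [2, a]$ and $k = a - \sum_j c_j \geqslant 0$. Hence $N_{a, A} = a - f(a)$, where
\[
f(a) := \max \Bigl\{\sum_j (c_j - 1) : c_j \in A \cap [2, a],\ \sum_j c_j \leqslant a\Bigr\},
\]
so the hypothesis becomes $f(a) \leqslant a - b + \ell$. If $A \cap [2, a] = \varnothing$, then $f(a) = 0$ and $N_{a, A} = a \geqslant b - \ell$; counting the $\ell$ elements of $A \setminus \{0, 1\}$, which must lie in $[a + 1, b]$ of size $b - a$, forces $a = b - \ell$ and $A = \{0, 1, a + 1, \ldots, b\}$, the second family. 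Otherwise $A \cap [2, a] \neq \varnothing$ and $f(a) \geqslant 1$; set $\gamma := a - b + \ell + 1 \geqslant 2$. Using one copy of $c^{**} := \max(A \cap [2, a])$ gives $c^{**} \leqslant \gamma$, and the identity $|A \cap [2, a]| + |A \cap (a, b - 1]| = \ell - 1$ combined with the bounds $(\gamma - 1) + (b - 1 - a) = \ell - 1$ forces equality: $A \cap [2, a] = [2, \gamma]$ and $A \cap (a, b - 1] = (a, b - 1]$. Hence $A = \{0, 1, \ldots, \gamma\} \cup \{a + 1, \ldots, b\}$, and $N_{a, A} = \lceil a / \gamma \rceil = 1 + \lceil (b - \ell - 1)/\gamma \rceil$; requiring this $\geqslant b - \ell$ while $\gamma \geqslant 2$ forces $b - \ell - 1 \leqslant 1$, hence $\ell = b - 2$ and $A = \{0, 1, \ldots, b\} \setminus \{a\}$, the first family.

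Suppose next that $1 \notin A$. Every $a_i \geqslant 2$, so $n_{a, A} \geqslant 2L \geqslant 2(b - \ell)$. If $\ell = b - 2$, then $A \cap [2, b - 1]$ equals all of $[2, b - 1]$, giving $A = \{0, 2, 3, \ldots, b\}$---the first family with $a^* = 1$, for which a direct check gives $N_{1, A} = 2$ via $b + 1 = 2 + (b - 1)$. If $\ell \leqslant b - 3$, so $L \geqslant 3$, I aim for a contradiction by splitting on whether $n_{a, A} = a$ or $n_{a, A} \geqslant a + b$. In the first case, $a \geqslant 2(b - \ell)$ combined with $a \leqslant b - 1$ forces $\ell \geqslant (b + 1)/2$, and a counting argument paralleling the previous case, using $\alpha := \min(A \setminus \{0\}) \geqslant 2$ in place of $1$ as a partial ``filler'', closes the argument. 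In the second case, $a \notin \mathcal{P}(A)$ gives a sum-free constraint on $A$ with respect to the target $a + b$ (or the simpler $b + 1$ when $a = 1$), which bounds $|A|$ severely enough to contradict $\ell \leqslant b - 3$.

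The main obstacle is the case $1 \notin A$ with $\ell \leqslant b - 3$: the absence of the $1$-filler breaks the clean formula $N_{a, A} = a - f(a)$, and one must split further on $\alpha$ and on whether $b - 1 \in A$. However, the underlying counting principle---that $A$ has only $\ell + 2$ elements in $\{0, 1, \ldots, b\}$, and any gap in $A$ near $a$ makes the representation of $n_{a, A}$ expensive---remains the driving force, so I expect the sum-free considerations (adapted to work without the $1$-filler) to close the case cleanly.
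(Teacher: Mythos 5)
There is a genuine gap. Your treatment of the case $1 \in A$ is complete and correct (and is a genuinely different, rather clean route: the filler formula $N_{a,A} = a - f(a)$ plus the two-sided count $(\gamma-1)+(b-1-a)=\ell-1$ does pin down both exceptional families). But the case $1 \notin A$ with $\ell \leqslant b-3$ is not proved: you state two sub-strategies and then explicitly defer them, and neither sketch closes as described. For the subcase $n_{a,A} = a$, the filler formula genuinely breaks when the smallest nonzero element is $\alpha \geqslant 2$ (minimizing the number of summands is no longer equivalent to a single maximization of $\sum_j (c_j-1)$, since you cannot always pad a deficit), so ``paralleling the previous case'' is not a proof. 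For the subcase $n_{a,A} \geqslant a+b$, the proposed mechanism is pointed the wrong way: the condition $a \notin \mathcal{P}(A)$ forces $A$ to have \emph{few} elements below $a$, i.e.\ it bounds $\ell$ from \emph{above}, which is perfectly consistent with $\ell \leqslant b-3$ (e.g.\ $A = \{0, a+1, \dots, b\}$ satisfies $a \notin \mathcal{P}(A)$ with $\ell = b-a-1$). No contradiction with $\ell \leqslant b-3$ can come from that constraint alone; what must be bounded is $N_{a,A}$ itself, and sum-freeness of $A$ with respect to $a$ does not do that.

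The missing idea is the one the paper uses uniformly (and which also rescues your $n_{a,A}=a$ subcase): take a minimal representation $n_{a,A} = a_1 + \cdots + a_L$ and observe that the partial sums $a_1+a_2, \dots, a_1+\cdots+a_L$ are pairwise distinct mod $b$ \emph{and} avoid $B$ mod $b$ (a subsum of two or more terms congruent to an element of $B$ could be replaced by a single element, contradicting minimality of $L$). This gives $L - 1 \leqslant b - (\ell+1)$ at once, with no case split on whether $1 \in A$; and when $L = b-\ell$ these partial sums exhaust $(\mathbb{Z}/b\mathbb{Z})\setminus B$, so permuting the summands forces all $a_i$ to equal a single $h$, after which the two families drop out. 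In your subcase $n_{a,A}=a$ with $1 \notin A$ the same count works over the integers: the $L-1$ multi-term partial sums together with the element $1$ are $L$ distinct members of $[1,a]\setminus A$, giving $L \leqslant a - |A \cap [2,a]| \leqslant b-\ell-1$ directly. Without importing this partial-sum counting (or something equivalent) your argument does not close, so as written the proposal does not constitute a proof of the lemma.
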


\begin{proof} 
Choose some $a$ in the range $1 \leqslant a \leqslant b-1$, and let $n_{a,A}=a_1+\dots+a_{L}$ where $L:=N_{a,A}$, with each $a_i \in A$. All subsums are non-zero mod $b$, as both $n_{a,A}$ and $L$ are minimal (see the proof of Lemma \ref{L1} for a longer explanation of this fact). Furthermore a subsum with more than one element cannot be congruent mod $b$ to an element of $B$ else we can replace that subsum by the single element, contradicting the minimality of $L$. Hence the residue classes mod $b$ of 
\begin{equation}
\label{eq displayed values}
a_1+a_2,\dots,a_1+\dots+a_{L}
\end{equation}
are all distinct  and do not belong to $B \mod b$.
This yields $L-1$ distinct residue classes of $(\mathbb Z/b\mathbb Z)\setminus B$, and so
$N_{a,A}\leqslant b-\ell$. Therefore either $N_A^*\leqslant b-\ell - 1$, in which case we are done, or 
we are in a case where $N_{a,A}= b-\ell$.

If $N_{a,A} = b- \ell$ then the displayed values \eqref{eq displayed values} yield all the residue classes of $(\mathbb Z/b\mathbb Z)\setminus B$.  This is also true if we list the $a_i$ in a different order, so we can swap $a_2$ and $a_3$ and find that $ a_1+a_2\equiv a_1+a_3 \pmod b$ (as this element is the only difference between the two lists). Thus $a_2=a_3$. But this is true for any ordering of the $a_i$'s, so 
$n_{a,A}$ is given by $L=N_{a,A}$ copies of some $h\in A$. Therefore 
\begin{equation}
\label{eq structure of A}
A=\{0,1,\dots,b\}\setminus \{ (2h)_b,(3h)_b,\dots,(Lh)_b\},
\end{equation}
where $(t)_b$ denotes the least positive residue of $t \pmod b$, and $a \equiv Lh \pmod b$.

We split into two cases according to the value of the greatest common divisor $(h,b)$. If   $(h,b) > 1$ then  $1 \in A$ (since $1 \not\equiv 0$ mod $(h,b)$). Therefore $n_{a,A} = a=L h$. Moreover $a - 1 \in A$ (since $a-1\equiv -1 \not\equiv 0$ mod $(h,b)$) and   $a = (a-1) + 1$ implies that $N_{a,A} \leqslant 2$. But $N_{a,A}= b-\ell\geq 2$, and so   $\ell = b-2$. Hence $A = \{0,1,\dots,b\}\setminus \{a\}$. 

We may now assume that $(h,b) = 1$. Then $n_{a,A}=Lh$, so that \[a\equiv Lh \equiv ((L+j)h)_b+((b-j)h)_b\] for $1\leq j \leq \ell-1$. Since $L = b - \ell$ we have both $L+1 \leqslant L+j \leqslant b-1$ and $L+1 \leqslant b - j \leqslant b-1$.  Therefore by \eqref{eq structure of A} we have $((L+j)h)_b \in A$ and $((b-j)h)_b \in A$ for each $j$. Thus \[((L+j)h)_b+((b-j)h)_b\in \mathcal P(A) \text{ and }  n_{a,A}\leq ((L+j)h)_b+((b-j)h)_b <b+b=2b.\] Therefore $((L+j)h)_b+((b-j)h)_b=Lh $ or $Lh+b$. If $((L+j)h)_b+((b-j)h)_b=Lh =n_{a,A}$ then
$ N_{a,A}\leqslant 2$. Since $N_{a,A}= b-\ell\geq 2$ we conclude that $N_{a,A}=2$ and $\ell = b-2$, and so
$A=\{ 0,1,\dots, b\} \setminus \{ a\}$ again. 

Otherwise $Lh<b$ and $((L+j)h)_b+(-jh)_b =Lh+b$ for all $j$ in the range $1 \leqslant j\leqslant \ell - 1$. This means that for all such $j$ we have
\begin{equation}
\label{eq inequality chain}
b>((L+j)h)_b=Lh+(b-(-jh)_b)>Lh. 
\end{equation}
This implies that $((L+j)h)_b=(L+j)h$ for all $j$ in the range $1 \leqslant j \leqslant \ell - 1$. Indeed, suppose for contradiction that $j$ in that range is minimal such that $((L+j)h)_b < (L+j)h$. Then $((L + j-1)h)_b = (L + j-1)h < b$, by the assumption of minimality if $j \geqslant 2$, or by \eqref{eq inequality chain} if $j = 1$. So $((L+j)h)_b < h \leqslant Lh$. This is a contradiction to \eqref{eq inequality chain}.  

Choosing $j = \ell-1$ in the equation $((L+j)h)_b=(L+j)h$ we deduce that $(b-1)h<b$ and so $h=1$. Therefore $A = \{0,1,a+1,\dots,b-1,b\}$ with $a=L=b-\ell$ and so $2 \leqslant a \leqslant b-2$.
Thus we have established that $N_A^*\leqslant b-\ell - 1$ except when $A$ has one of the two special forms listed in the statement of the lemma. 
\end{proof}

\section{Proofs of Theorems \ref{thm 1} and \ref{thm 2}}
\label{section main proofs}

The result \cite[Theorem 4]{GS20} showed that \eqref{eq structure theorem} holds for all $N \geqslant 1$ when $\ell = 1$ (and so it holds for all $N \geqslant b-\ell$). Furthermore, \eqref{eq structure theorem} holds for trivial reasons if $\ell = b-1$, i.e. if $A = \{0,1,\dots,b\}$. So without loss of generality may assume $2 \leqslant \ell \leqslant b-2$ in these two proofs.


\begin{proof}[Proof of Theorem \ref{thm 1}]
We have  $N_A^* \leqslant b-\ell$ by  Lemma \ref{L2}, and  $\vert 2B\vert \geqslant \min(b,\ell + 3)$ by Lemma \ref{L.Kneser}. Taking   $\Delta = 0$ and $K = 2$ in Corollary \ref{corr1}, we deduce that if $N =b-\ell$ then $n \in NA$ for all $n \leqslant bN/2$ with $n \notin \mathcal{E}(A)$. Applying the same argument with the set $A$ replaced by the set $b-A$ we conclude that if $N = b-\ell$ then $m \in N(b-A)$ for all $m \leqslant bN/2$ with $m \notin \mathcal{E}(b-A)$. 

So, if $1 \leqslant n < bN$ and $n \not\in (\mathcal{E}(A) \cup (bN - \mathcal{E}(b-A)))$ then either $1 \leqslant n \leqslant bN/2$, in which case $n \in NA$ by the applying the first argument to $n$, or $bN/2 \leqslant n < bN$, in which case $n \in NA$ by applying the second argument to $m = bN - n$. Since $bN \in NA$ for trivial reasons, we have established \eqref{eq structure theorem} for $N = b - \ell$. 

The result \cite[Lemma 2]{GS20}  established that if \eqref{eq structure theorem} holds for some $N_0 \geqslant N_{A}^*$ then it holds for all $N \geqslant N_0$. So \eqref{eq structure theorem} holds for all $N\geqslant b-\ell$, and Theorem \ref{thm 1} is proved.
\end{proof}

\begin{proof}[Proof of Theorem \ref{thm 2}]Following the proof of Theorem \ref{thm 1},
we have  $N_A^* \leqslant b-\ell-1$ except in the two exceptional cases of  Lemma \ref{L2}, and  $\vert 2B\vert \geqslant \min(b,\ell + 4)$  except in the six exceptional cases of  Lemma \ref{L.Kneser}. Outside these exceptional cases, the proof then follows analogously to the proof  of Theorem \ref{thm 1}, taking   $\Delta = 1$ and $K = 2$ in Corollary \ref{corr1}.

It remains to consider the exceptional cases. All of the exceptional cases in Lemma \ref{L2} are excluded in the statement of Theorem \ref{thm 2}, except for when
$A$ or $b-A$ equals $\{0,2,3,\dots,b\}$. In this instance, $\mathcal{E}(A) = \{1\}$ or $\mathcal{E}(b-A) = \{1\}$, respectively, and \eqref{eq structure theorem} manifestly holds for all $N \geqslant 1 = b- \ell - 1$.

Regarding the exceptional cases from Lemma \ref{L.Kneser}, the example $A = \{0,1,b-1,b\}$ is excluded from  Theorem \ref{thm 2}. We will prove that equation \eqref{eq structure theorem} holds for $N \geqslant  b-\ell - 1$ for all of 
 the other exceptional cases from Lemma \ref{L.Kneser} (in the five cases of Section \ref{subsection alpha equals 2 Kneser}, and in Case \ref{Case first nontrivial Kneser} of Appendix \ref{section checking examples}). This completes the proof of Theorem \ref{thm 2}. 
\end{proof}

 \appendix
\section{A catalogue of exceptional cases}
\label{section checking examples}


  \subsection{Resolving the five exceptional families of $A$ for which $|A|=4$ and  $|2B|\leqslant 5$} 
  \label{subsection alpha equals 2 Kneser}
  
  
   
A key tool will be \cite[Corollary 2]{GS20},  which  showed that if $n\equiv a\pmod b$ and $n_{a,A}\leqslant n\leqslant bN-n_{b-a,b-A}$ then $n\in NA$ for all $N\geqslant 1$ if and only if
 $ N_{a,A} =\tfrac 1b (n_{a,A}+n_{b-a,b-A})$ for all $a$.

\begin{Case}
\label{Case first alpha equals 2}
If $A=\{ 0<a<2a<b\}$ with $(a,b) = 1$, then  \eqref{eq structure theorem}  holds for all $N \geqslant 1$. 
\end{Case} 
\begin{proof}Let $A':=b-A=\{ 0<2a'-b<a'<b\}$ with $a'=b-a$. For $1 \leqslant k  \leqslant (b-1)/2$ we have $n_{2ka,A}=2k\times a = k \times 2a$ while $n_{b-2ka,b-A}=n_{2ka',A'}=k\times (2a'-b)$; in the range $0 \leqslant k \leqslant (b-2)/2$, we have
  $n_{(2k+1)a,A}=a+k\times 2a$ while $n_{b-(2k+1)a,b-A}=n_{(2k+1)a',A'}=a'+k\times (2a'-b)$. So $N_{2ka,A} = k$ and $N_{(2k+1)a, A} = k + 1$, and so for all $1 \leqslant r \leqslant b-1$ we have $N_{r, A} = \frac{1}{b}(n_{r,A} + n_{b- r, b-A})$. Thus \cite[Corollary 2]{GS20} shows \eqref{eq structure theorem}  holds for all $N\geqslant 1$.
  \end{proof}
  
\begin{Case}
\label{Case second alpha equals 2}
If $A = \{0 < 2a - b < a < b\}$ with $(a,b) = 1$, then \eqref{eq structure theorem}  holds for all  $N \geqslant 1$. 
\end{Case}
\begin{proof} 
This follows from the previous case by symmetry. 
\end{proof}
 
\begin{Case}
\label{Case third alpha equals 2}
If $A=\{ 0<a,\tfrac b2<b\}$ with $(a,\tfrac b2)=1$, then \eqref{eq structure theorem}  holds for all  $N \geqslant 1$. 
\end{Case}
\begin{proof}
Here $b-A= \{ 0<b-a,\tfrac b2<b\}$ is of the same form. By performing a simple case analysis, we deduce that for $1 \leqslant k < \tfrac b2$ we have 
   $n_{ka,A}=k\times a$ and $n_{b-ka,b-A}=n_{k(b-a),b-A}=k\times (b-a)$, while for $0 \leqslant k < \tfrac b2$ we have
   $n_{ka+\tfrac b2,A}=k\times a+\tfrac b2$ 
   and $n_{\tfrac b2-ka,b-A}=n_{k(b-a)+\tfrac b2,A}=k\times (b-a)+\tfrac b2$. Then \eqref{eq structure theorem}  holds for all $N\geqslant 1$ by \cite[Corollary 2]{GS20}, as described above. 
   \end{proof}

 \begin{Case}
 \label{Case fourth alpha equals 2}
If $A = \{ 0<h<b-h<b\}$ with $(h,b) = 1$ then \eqref{eq structure theorem}  holds for all $N\geqslant b-1-h$. In particular  if $h \neq 1$ then \eqref{eq structure theorem} holds for all $N \geqslant b-3 = b - \ell - 1$. 
\end{Case}
\begin{proof}
If $a\not\equiv 0 \pmod b$ then the summands in $n_{a,A}$ are either all $h$ or all $b-h$ since if we had both we could remove one of each, contradicting minimality. Therefore
\begin{equation}
\label{eq: naA}
N_{kh,A} = \begin{cases} k   \\ b-k \end{cases} \text{ and }
n_{kh,A} = \begin{cases}
k h & \text{if } 1\leqslant k <b-h \\
(b-k)(b-h) & \text{if } b-h\leqslant k\leqslant b-1.
\end{cases} 
\end{equation}

If $k\leqslant h$ then $n_{kh,A}= kh$ and $n_{b-kh,b-A}=n_{k(b-h),A}=k\times (b-h)$. Then the structure \eqref{eq structure theorem}, restricted to the arithmetic progression $n \equiv kh \pmod b$, follows from \cite[Corollary 2]{GS20}.

If $h<k\leqslant \tfrac b2$ then $n_{kh,A} = kh$ and $n_{b-kh,b-A}=n_{(b-k)h,A}=(b-k)h$. Therefore we wish to show that if $n$ is in the range $kh\leqslant n\leqslant Nb-(b-k)h=kh+(N-h)b$ and $n\equiv kh \pmod b$ then $n\in NA$ (as long as $N \geqslant b-1-h$).

If we write $n=k\times h + j\times b$  for $j\in [0,N-k]$ then this covers such $n$ with
$kh\leqslant n\leqslant  kh+(N-k)b$; and if we write $n=(b-k)\times (b-h) + i\times b$ for $i\in[0,N+k-b]$ then we cover such $n$ with $kh+ (b-k-h)b\leqslant n\leqslant  kh+(N-h)b$. Together these two ranges cover the entire range of $n$, provided
$b-k-h\leqslant N-k+1$. This inequality holds, since $N\geqslant  b-1-h$.

To deal with the remaining arithmetic progressions $kh \pmod b$ for $k > \tfrac b2$ we note that $NA = bN - NA$, and so the result follows from the above using the arithmetic progression $-kh \pmod b$. 
\end{proof}

\begin{Case}
\label{Case fifth alpha equals 2}
If $A=\{ 0<a<a+\tfrac b2<b\}$ with $(a,\tfrac b2)=1$ then \eqref{eq structure theorem}  holds for all $N\geqslant  \tfrac b2$. 
\end{Case}
\begin{proof}
Note first that $b-A= \{ 0<\tfrac b2-a<b-a<b\}$, which is of the same form as $A$. The proof splits into four subcases, which we will deal with in two sets of two.

 If $1\leqslant k\leqslant \tfrac b2$ with $k$ even then 
$n_{ka,A}=k\times a$ and $n_{b-ka,b-A}=k\times (\tfrac b2-a)$. Therefore, from \eqref{eq aritmetic progressions},  we wish 
to represent all $n\equiv ka \pmod b$ with
$ka\leqslant n\leqslant ka+ b(N- \tfrac k2)$ by an element in $NA$.

If $1\leqslant k\leqslant \tfrac b2$ with $k$ odd then  
$n_{ka,A}=k\times a$ and $n_{b-ka,b-A}=(k-1)\times (\tfrac b2-a)+(b-a)$,
so wish to represent $n\equiv ka \pmod b$ with
$ka\leqslant n\leqslant ka+  b(N- \tfrac {k+1}2)$ by an element in $NA$. 

We let 
\[
n=(k-2i)\times a+2i\times (a+\tfrac b2)+ j\times b=ka+(i+j)b
\]
for $0\leqslant 2i\leqslant k$ and $0\leqslant j\leqslant N-k$. We have $n \in (k+j)A \subset NA$, and we obtain the full range of $n$ in each case, provided $N\geqslant k$. This is satisfied if $N \geqslant \frac{b}{2}$.

If $1\leqslant k< \tfrac b2$ with $k$ odd then 
$n_{ka+\tfrac b2,A}=(k-1)\times a+(a+\tfrac b2)$ and 
$n_{\tfrac b2-ka,b-A}=k\times (\tfrac b2-a)$,
so wish to represent $n\equiv ka+\tfrac b2 \pmod b$ with
$ka+\tfrac b2\leqslant n\leqslant ka+\tfrac b2+  b(N-\tfrac{k+1}2)$ by an element of $NA$.

If $1\leqslant k< \tfrac b2$ with $k$ even then 
$n_{ka+\tfrac b2,A}=(k-1)\times a+(a+\tfrac b2)$ and 
$n_{\tfrac b2-ka,b-A}=(k-1)\times (\tfrac b2-a)+(b-a)$,
so wish to represent $n\equiv ka+\tfrac b2 \pmod b$ with
$ka+\tfrac b2\leqslant n\leqslant ka+\tfrac b2+ b(N-1-\tfrac k2)$ by an element of $NA$. 
 
We let 
\[
n=(k-2i-1)\times a+(2i+1)\times (a+\tfrac b2)+ j\times b=ka+\tfrac b2+(i+j)b
\]
for $0\leqslant 2i+1\leqslant k$ and $0\leqslant j\leqslant N-k$. We have $n \in (k + j)A \subset NA$, and we obtain the full range provided $N\geqslant k$. This is satisfied if $N \geqslant \frac{b}{2}$. 
\end{proof}

\subsection{Resolving the exceptional cases in which $H(2B) \neq \{0\}$}  
\begin{Case}
\label{Case first nontrivial Kneser}
If $A = \{0<a<\tfrac b2<a+\tfrac b2<b\}$ with $b$  even and $(a,\tfrac b2) = 1$ then \eqref{eq structure theorem}  holds  for $N \geqslant \frac{b}{2} - 1$.
\end{Case}
\begin{proof}
If $A^\prime: = \{0 < a  < a + b/2 < b\}$ then $\mathcal{P}(A) = \mathcal{P}(A^\prime) \cup \{n \geqslant 0: n \equiv b/2 \pmod b\}$, and $\mathcal{P}(b-A) = \mathcal{P}(b-A^\prime) \cup \{n \geqslant 0: n \equiv b/2 \pmod b\}$. From the proof of Case \ref{Case fifth alpha equals 2}, we see that \eqref{eq structure theorem} holds provided $N \geqslant \frac{b}{2} - 1$ except possibly for the residue class $n \equiv b/2 \pmod b$.  

However, since $n_{b/2,A} = n_{b/2,b-A} = b/2$, \cite[Corollary 2]{GS20} finishes the matter.
 \end{proof}

 \bibliographystyle{plain}
 \bibliography{frob}

   \end{document}